\pdfoutput=1 
\RequirePackage[l2tabu, orthodox]{nag} 
\documentclass[a4paper,reqno,11pt]{article} 
\usepackage[stretch=10]{microtype} 
\usepackage[german,french,UKenglish]{babel}
\usepackage[T1]{fontenc} 
\usepackage[utf8]{inputenc} 
\usepackage{csquotes} 
\usepackage{letltxmacro} 
\usepackage{amsmath,amsfonts,amssymb,amsthm,mathrsfs}
\usepackage[margin=1.3in]{geometry}
\usepackage{setspace}
\usepackage{enumitem}
\usepackage{calc}
\usepackage{makebox}
\usepackage{xifthen}
\usepackage{titlesec} 
\usepackage{tikz}
\usepackage{tikz-cd} 
\usetikzlibrary{calc}
\usetikzlibrary{matrix,arrows}
\usetikzlibrary{decorations.markings}
\usepackage[lf]{Baskervaldx} 
\let\rtimess\rtimes
\usepackage[bigdelims,vvarbb]{newtxmath} 
\let\rtimes\rtimess 
\usepackage[cal=boondoxo]{mathalfa} 

\usepackage{verbatim} 
\usepackage{aliascnt} 
\usepackage[unicode,colorlinks=true,urlcolor=blue!70!black,citecolor=blue!60!black,linkcolor=blue!60!black,pdfauthor={Remy van Dobben de Bruyn},pdftitle={Grothendieck Galois theory and étale exodromy}]{hyperref} 


\setlength{\parindent}{0pt}
\setlength{\parskip}{1em plus .1em minus .1em}
\setlength{\topsep}{0pt}
\setlength{\partopsep}{0pt}
\setlength{\skip\footins}{2em}

\titlespacing{\section}{0pt}{12pt plus 4pt minus 2pt}{0pt plus 2pt minus 2pt}
\titlespacing{\subsection}{0pt}{0pt plus 2pt minus 2pt}{0pt plus 2pt minus 2pt}
\titlespacing{\subsubsection}{0pt}{0pt plus 2pt minus 2pt}{0pt plus 2pt minus 2pt}

\titleformat{\section}[block]{\Large\bfseries\scshape\filcenter}{\thesection.}{1ex}{}
\titleformat{\subsection}{\large\scshape\filcenter}{\thesubsection}{1ex}{}


\numberwithin{equation}{section}
\newtheoremstyle{thms}{1em}{0pt}{\itshape}{}{\itshape\bfseries}{. ----}{ }{\thmname{#1} \thmnumber{#2}}
\theoremstyle{thms}
\newaliascnt{Thm}{equation}
\newtheorem{Thm}[Thm]{Theorem}				
\aliascntresetthe{Thm}

\newaliascnt{Prop}{equation}							

\aliascntresetthe{Prop}

\newaliascnt{Lemma}{equation}						
\newtheorem{Lemma}[Lemma]{Lemma}
\aliascntresetthe{Lemma}

\newaliascnt{Cor}{equation}							
\newtheorem{Cor}[Cor]{Corollary}
\aliascntresetthe{Cor}

\newaliascnt{Conj}{equation}							

\aliascntresetthe{Conj}

\newaliascnt{Question}{equation}						

\aliascntresetthe{Question}

\newtheoremstyle{defs}{1em}{0pt}{}{}{\itshape\bfseries}{. ----}{ }{\thmname{#1} \thmnumber{#2}}
\theoremstyle{defs}
\newaliascnt{Rmk}{equation}							

\aliascntresetthe{Rmk}


\newaliascnt{Fact}{equation}							

\aliascntresetthe{Fact}

\newaliascnt{Def}{equation}							
\newtheorem{Def}[Def]{Definition}
\aliascntresetthe{Def}

\newaliascnt{Ex}{equation}							
\newtheorem{Ex}[Ex]{Example}
\aliascntresetthe{Ex}

\newaliascnt{Con}{equation}							

\aliascntresetthe{Con}

\newaliascnt{Not}{equation}							

\aliascntresetthe{Not}

\newaliascnt{Setup}{equation}						

\aliascntresetthe{Setup}

\newaliascnt{Picture}{equation}						

\aliascntresetthe{Picture}

\newtheoremstyle{par}{1em}{0pt}{}{}{\itshape\bfseries}{. ----}{ }{\thmnumber{#2}}
\theoremstyle{par}
\newaliascnt{Par}{equation}							
\newtheorem{Par}[Par]{Paragraph}
\aliascntresetthe{Par}

\theoremstyle{thms}
\newtheorem{thm}{Theorem}
\newtheorem*{thm*}{Theorem}

\newtheorem*{lemma*}{Lemma}


\LetLtxMacro\oldproof\proof	
\renewcommand{\proof}[1][Proof]{\oldproof[#1]\unskip}
\LetLtxMacro\oldendproof\endproof
\renewcommand{\endproof}{\oldendproof\unskip}

\newenvironment{itemize*} 
  {\begin{itemize}
    \setlength{\itemsep}{1em}
    \setlength{\parskip}{-1em}
    \setlength{\topsep}{0pt}
    \setlength{\partopsep}{0pt}}
  {\end{itemize}}
  
\newenvironment{enumerate*}
  {\begin{enumerate}
    \setlength{\itemsep}{1em}
    \setlength{\parskip}{-1em}
    \setlength{\topsep}{0pt}
    \setlength{\partopsep}{0pt}}
  {\end{enumerate}}
  
\setlist{itemsep=0em,topsep=0cm,partopsep=0em,parsep=\lineskip}

\setlist[enumerate]{label=\normalfont(\alph*), align=left, leftmargin=3em, labelwidth=1em, itemindent=0pt, listparindent=0pt, labelindent=1em, labelsep=*}

\setlist[itemize]{leftmargin=1.3em}

\tikzcdset{arrow style=math font}



\newcommand{\Set}{\mathbf{Set}}
\newcommand{\Fin}{\mathbf{Fin}}
\newcommand{\ProFin}{\mathbf{ProFin}}
\renewcommand{\Top}{\mathbf{Top}}

\newcommand{\et}{{\operatorname{\acute et}}}

\newcommand{\lc}{_{\operatorname{lc}}}
\newcommand{\cons}[1][]{_{\ifthenelse{\equal{#1}{}}{\operatorname{cons}}{#1\operatorname{-cons}}}}

\newcommand{\coh}{^{\operatorname{coh}}}
\newcommand{\eff}{^{\operatorname{eff}}}

\DeclareMathOperator{\Pro}{Pro}
\DeclareMathOperator{\Hom}{Hom}

\DeclareMathOperator{\Sub}{Sub}
\DeclareMathOperator{\Fun}{Fun}

\DeclareMathOperator{\ob}{ob}
\DeclareMathOperator{\Sh}{Sh}

\DeclareMathOperator*{\colim}{colim}

\DeclareMathOperator{\ev}{ev}
\DeclareMathOperator{\el}{el}

\newcommand{\op}{^{\operatorname{op}}}
\newcommand{\cts}{^{\operatorname{cts}}}
\newcommand{\lex}{^{\operatorname{lex}}}
\newcommand{\fin}{^{\operatorname{fin}}}

\DeclareMathOperator{\id}{id}

\newcommand{\punct}[1]{\makebox[0pt][l]{\,#1}} 



\let\OLDthebibliography\thebibliography
\renewcommand\thebibliography[1]{
  \OLDthebibliography{#1}
  \setlength{\parskip}{0pt}
  \setlength{\itemsep}{0pt plus 0.1em}
}


\hyphenation{com-po-si-tion}
\hyphenation{equiv-a-lence}
\hyphenation{equiv-a-lences}
\hyphenation{si-mul-ta-ne-ous-ly}


\begin{document}

\renewcommand{\sectionautorefname}{Section}
\renewcommand{\subsectionautorefname}{Subsection}		

\begin{center}
\vspace*{-3em}
\noindent\makebox[\linewidth]{\rule{15cm}{0.4pt}}
\vspace{-.5em}

{\LARGE{\textbf{Grothendieck Galois theory and  \'etale exodromy}}}

\vspace{.5em}

{\textsc{Remy van Dobben de Bruyn}}

\vspace*{.25em}
\leavevmode
\noindent\makebox[\linewidth]{\rule{11cm}{0.4pt}}
\vspace{.5em}
\end{center}

\renewcommand{\abstractname}{\small\bfseries\scshape Abstract}

\begin{abstract}\noindent
Finite \'etale covers of a connected scheme $X$ are parametrised by the \'etale fundamental group via the monodromy correspondence. This was generalised to an \emph{exodromy correspondence} for constructible sheaves, first in the topological setting by MacPherson, Treumann, Lurie, and others, and recently also for \'etale and pro-\'etale sheaves by Barwick--Glasman--Haine and Wolf. The proof of the \'etale exodromy theorem is long and technical, using many new definitions and constructions in the $(\infty,2)$-category of $\infty$-topoi. This paper gives a quick proof of the \'etale exodromy theorem for constructible sheaves of sets (with respect to a fixed stratification), in the style of Grothendieck's Galois theory.
\end{abstract}

\section*{Introduction}
Let $X$ be a qcqs scheme with a finite decomposition $X = \coprod_{s \in S} X_s$ into connected constructible locally closed subsets. Recall that an \'etale sheaf $\mathscr F$ on $X$ is \emph{$S$-constructible} if the restriction to each stratum $X_s$ is locally constant with finite stalks. Choose a geometric point $\bar s \in X_s$ in each stratum, and let $F_{\bar s} \colon \Sh\cons[S](X_\et) \to \Fin$ be the corresponding fibre functor. Let $\Pi_1^\et(X,S)$ be the full subcategory of $\Fun(\Sh\cons[S](X_\et),\Fin)$ on the fibre functors $F_{\bar s}$ for $s \in S$, which is naturally enriched in profinite sets (see \autoref{Ex Funlex}). We prove the following:

\begin{thm}
Let $X$ be a qcqs scheme with a finite constructible decomposition $X = \coprod_{s \in S} X_s$. Assume that each stratum $X_s$ is connected, and let $\Pi_1^\et(X,S)$ be as above. Then the functor
\begin{align*}
\ev \colon \Sh\cons[S](X_\et) &\to \Fun\cts\big(\Pi_1^\et(X,S),\Fin\big) \\
\mathscr F &\mapsto \big(F \mapsto F(\mathscr F)\big)
\end{align*}
is an equivalence.
\end{thm}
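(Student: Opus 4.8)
The plan is to run the stratified analogue of Grothendieck's reconstruction theorem, treating the category $\mathscr C = \Sh\cons[S](X_\et)$ of $S$-constructible sheaves as a ``many-object Galois category'' whose fundamental invariant is the profinite category $\Pi_1^\et(X,S)$ in place of a single profinite group. The first task is to record the exactness of the situation: $\mathscr C$ has finite limits and finite colimits, each fibre functor $F_{\bar s}$ is exact (it preserves both), and the family $\{F_{\bar s}\}_{s\in S}$ is jointly conservative---a map of $S$-constructible sheaves is an isomorphism as soon as it induces an isomorphism on $F_{\bar s}$ for every $s$, since constructibility forces the restriction to each $X_s$ to be locally constant and hence determined by its stalk at $\bar s$. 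Restricting attention to a single stratum recovers classical Galois theory: locally constant $S$-constructible sheaves on the connected scheme $X_s$ are equivalent to finite continuous $\pi_1^\et(X_s,\bar s)$-sets, and correspondingly $\operatorname{End}_{\Pi_1^\et(X,S)}(F_{\bar s}) = \pi_1^\et(X_s,\bar s)$.

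The second step is pro-representability. I would show that each $F_{\bar s}$ is pro-represented by a pro-object $\widehat P_s \in \Pro(\mathscr C)$---built from the strict henselisation at $\bar s$ together with the pro-(finite étale) tower of the stratum---so that $F_{\bar s}(\mathscr F) = \Hom(\widehat P_s,\mathscr F)$. Pro-Yoneda then identifies the morphism sets of $\Pi_1^\et(X,S)$ with $\Hom_{\Pro(\mathscr C)}(\widehat P_t,\widehat P_s)$, exhibiting them as cofiltered limits of finite sets; this is exactly the profinite enrichment of \autoref{Ex Funlex}, and it shows simultaneously that $\ev(\mathscr F)$ is continuous, since each $F_{\bar s}(\mathscr F)$ is finite and the induced action of the profinite $\Hom$-sets factors through a finite quotient.

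With pro-representability in hand, full faithfulness of $\ev$ is the formal part. Injectivity is immediate from joint conservativity, a sheaf morphism being determined by its stalks. For the surjective half, a morphism in $\Fun\cts(\Pi_1^\et(X,S),\Fin)$ is a family of maps $F_{\bar s}(\mathscr F)\to F_{\bar s}(\mathscr G)$ that is equivariant for every morphism of $\Pi_1^\et(X,S)$: the diagonal (automorphism) equivariance upgrades each map to a morphism of locally constant sheaves on $X_s$ by classical Galois theory, and the off-diagonal (specialisation) equivariance is exactly the compatibility that lets one glue these stratum-wise morphisms into a single morphism of sheaves, by the same recollement principle that governs essential surjectivity below.

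The heart of the argument, and the step I expect to be the main obstacle, is essential surjectivity. Given a continuous $\Phi\colon \Pi_1^\et(X,S)\to\Fin$, its restriction along $\pi_1^\et(X_s,\bar s)=\operatorname{End}(F_{\bar s})$ produces a locally constant sheaf $\mathscr F_s$ on each stratum, and the values of $\Phi$ on the off-diagonal morphisms supply specialisation maps between the $\mathscr F_s$. The real content is to prove that this data assembles into an honest $S$-constructible sheaf on $X$, i.e.\ that the profinite sets $\Hom_{\Pi_1^\et(X,S)}(F_{\bar s},F_{\bar t})$ record \emph{exactly} the recollement datum relating adjacent strata. Concretely this requires analysing, via the strictly henselian local rings at points lying between two strata, how a constructible sheaf determines and is determined by its generisation/specialisation maps, and checking that every continuous---hence finite-orbit---compatible family of such maps is realised by a genuine sheaf. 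Reducing the étale recollement of constructible sheaves to the combinatorics of the profinite category $\Pi_1^\et(X,S)$ is where the essential difficulty lies; once it is in place, essential surjectivity, and with it the theorem, follows.
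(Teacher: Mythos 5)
There is a genuine gap, and it sits exactly where you say it does. Your proposal reduces both fullness and essential surjectivity to an unproven ``recollement principle'': the assertion that the off-diagonal profinite sets $\Hom_{\Pi_1^\et(X,S)}(F_{\bar s},F_{\bar t})$ record \emph{exactly} the glueing data relating strata, so that stratum-wise locally constant sheaves plus equivariant specialisation maps assemble into an $S$-constructible sheaf. But that assertion \emph{is} the theorem; naming it as ``the essential difficulty'' and then saying the theorem follows ``once it is in place'' is a restatement, not a proof. Moreover, the route you sketch is precisely the one the paper avoids, because it is hard: making recollement work for more than two strata requires iterating along the specialisation order, with base-change/coherence conditions at each step (this is essentially the Barwick--Glasman--Haine strategy, with vanishing topoi and Segal conditions). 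It also quietly uses the poset structure of $S$, whereas the theorem as stated allows ``preorder'' stratifications (the paper's example of two curves meeting in two points); in that generality your identification $\operatorname{End}_{\Pi_1^\et(X,S)}(F_{\bar s}) = \pi_1^\et(X_s,\bar s)$ is false --- composing genuine off-diagonal morphisms $F_{\bar s} \to F_{\bar t} \to F_{\bar s}$ can produce endomorphisms not coming from $\pi_1^\et(X_s,\bar s)$ --- so even your first step of essential surjectivity (restricting $\Phi$ along $\operatorname{End}(F_{\bar s})$ to get a locally constant sheaf on $X_s$) does not go through as stated.

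The paper's proof supplies the missing idea by never working stratum-by-stratum at all. The two ingredients are: (i) an opcompactness lemma (\autoref{Lem finite opcompact}): every $F \in \Fun\cts(\Pi,\Fin)$ is opcompact in $\Fun\cts(\Pi,\ProFin)$ when $\Pi$ has finitely many objects; and (ii) formal pro-representability of the fibre functors, giving $h_{\coprod x}$ as a cofiltered limit of objects $\ev(A)$, $A \in \mathscr C$ (\autoref{Lem pointed filtered}). Combining these, any continuous functor $F$ receives a surjection $\ev(A) \twoheadrightarrow F$, any map $\ev(A) \to \ev(B)$ comes from a map in $\mathscr C$ after pulling back along an effective epimorphism, and subfunctors of $\ev(A)$ come from subobjects of $A$ (\autoref{Cor filtered colimit}). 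Fullness and essential surjectivity then follow purely from pretopos exactness --- descent along kernel pairs and quotients by equivalence relations --- with no glueing, no induction on strata, and no use of the order on $S$ (\autoref{Thm Galois exodromy}). So your outline identifies the obstacle correctly but does not overcome it; the filtered-colimit/opcompactness mechanism is the ingredient your proof is missing.
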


We set up an abstract framework for such a correspondence to hold, generalising Grothendieck's Galois theory \cite[Exp.\ V, Thm.\ 4.1]{SGA1}. This gives the following variants:
\begin{itemize}
\item If $Y_s \to X_s$ is a connected profinite \'etale Galois cover for each $s \in S$, we may restrict to the $S$-constructible sheaves that are trivialised over $Y_s$ for every $s$.
\item In particular, if $X$ is a scheme of finite type over a field $k$ of positive characteristic, we can restrict to the constructible sheaves $\mathscr F \in \Sh\cons[S](X_\et)$ such that the locally constant sheaf $\mathscr F|_{X_s}$ is tamely ramified for all $s \in S$. This gives a \emph{tame fundamental category} $\Pi_1^t(X,S)$.
\end{itemize}
Since tame fundamental groups are expected to be finitely presented, there is a chance that $\Pi_1^t(X,S)$ might be computable in certain cases. In \cite{vDdB} we give an explicit computation of $\Pi_1^t(X,S)$, where $X$ is a split normal toric variety over a field $k$, and $S$ is the stratification by torus orbits.

\phantomsection
\subsection*{Idea of the proof}
The contemporary way to prove an exodromy theorem for $S$-constructible sheaves is by studying representability of fibre functors $F_{\bar s} \colon \Sh\cons[S](X) \to \Set$. If each $F_{\bar s}$ is represented by some object $\mathscr F_s \in \Sh\cons[S](X)$, then these sheaves generate the topos $\Sh\cons[S](X)$. Then use that the $\mathscr F_s$ are atomic \cite[Tag \href{https://kerodon.net/tag/03WY}{03WY}]{Kerodon}, \cite[Def.\ 2.2.10]{HainePortaTeyssier} or in a certain sense `projective' \cite[Lem.\ 2.1 and Cor.\ 2.2]{vDdB}, \cite{vDdBWolf} to formally deduce that they freely generate $\Sh\cons[S](X)$, i.e.\ that the Yoneda functor
\[
\Sh\cons[S](X) \to \Fun\big(\{\mathscr F_s\}\op,\Set\big) \simeq \Fun\big(\{F_{\bar s}\},\Set\big)
\]
is an equivalence. This is analogous to the proof of monodromy via universal coverings: the fibre functor $F_{\bar x} \colon \Sh\lc(X) \to \Set$ on locally constant sheaves is represented by the universal cover $\widetilde X \to X$, so $\pi_1(X) = F_{\bar x}(\widetilde X) = \Hom(h_{\widetilde X},F_{\bar x}) = \Hom(F_{\bar x},F_{\bar x})$ by the Yoneda lemma.

For Galois categories $\mathscr C$ (\autoref{Def Galois}) such as $\Sh\cons[S](X_\et)$, the fibre functors are left exact, so they are always \emph{pro-representable} since $\Pro(\mathscr C) \simeq \Fun\lex(\mathscr C,\Fin)\op$. Thus, we do not have to study the representability question, at the expense of getting an object that no longer lives in $\mathscr C$. In section 1, we define a profinite topology on the $\Hom$ sets in $\Fun\lex(\mathscr C,\Fin)$ (of which $\Pi_1(\mathscr C)$ will be a full subcategory), and show that the evaluation $\ev \colon \mathscr C \to \Fun(\Fun\lex(\mathscr C,\Fin),\Fin)$ lands in \emph{continuous} functors.

Inspired by \cite[Tag \href{https://stacks.math.columbia.edu/tag/03A0}{03A0}]{Stacks}, to show that $\ev \colon \mathscr C \to \Fun\cts(\Pi_1(\mathscr C),\Fin)$ is an equivalence, one needs to check that it is `locally fully faithful' and `locally essentially surjective', in the sense of criteria (3), (4), and (5) of [\emph{loc.\ cit.}]. We prove these statements simultaneously by the following method:
\begin{itemize}
\item Fix $F \in \Fun\cts(\Pi_1(\mathscr C),\Fin)$. Since $\Pi_1(\mathscr C)$ is finite and each $F(x)$ for $x \in \Pi_1(\mathscr C)$ is finite, there is a surjection $h_{\coprod x} = \coprod_i h_{x_i} \twoheadrightarrow F$ for some $x_1,\ldots,x_n \in \Pi_1(\mathscr C)$. Note that $h_{x_i}$ and $h_{\coprod x}$ live in $\Fun\cts(\Pi_1(\mathscr C),\ProFin)$, not $\Fun\cts(\Pi_1(\mathscr C),\Fin)$.
\item The $x_i$ are pro-representable in $\mathscr C$, so $h_{\coprod x}$ is a cofiltered limit of $\ev(A_j)$ for $A_j \in \mathscr C$.
\item But $F$ is opcompact in $\Fun\cts(\Pi_1(\mathscr C),\ProFin)$, meaning that the natural map
\[
\colim_{\substack{\longrightarrow \\ j}} \Hom\big(\ev(A_j),F\big) \to \Hom\Big(\lim_{\substack{\longleftarrow \\ j}} \ev(A_j),F\Big) = \Hom\big(h_{\coprod x},F\big)
\]
is an isomorphism. This gives an epimorphism $\ev(A) \twoheadrightarrow F$, and is also used to show that any map $\ev(A) \to \ev(B)$ locally comes from a map in $\mathscr C$.
\end{itemize}
Opcompactness of $F$ is proved in \autoref{Lem finite opcompact}, and the rest of the argument outlined above is carried out in \autoref{Lem pointed filtered}. This implies local fullness and local essential surjectivity by \autoref{Cor filtered colimit}, and the main theorem \autoref{Thm Galois exodromy} follows easily.

\phantomsection
\subsection*{Comparison with earlier proofs}
The \'etale exodromy theorem \cite{BGH} is in many ways much stronger than our result: it deals with constructible sheaves of spaces instead of constructible sheaves of sets, it works for all stratifications simultaneously, and there is no connectedness hypothesis on the fibres. The proof (very roughly) consists of the following steps:
\begin{itemize}
\item First prove a monodromy theorem in this setting. For a connected scheme, this is more or less \'etale homotopy theory. But $X$ may have a profinite set of connected components, in which case the result is attributed to Lurie's profinite shape theory \cite[Appendix E]{LurieSAG}.
\item Treat the case of two strata using the vanishing $\infty$-topos (a certain lax fibre product in $\infty$-topoi), via a base change theorem similar to results of Gabber.
\item Treat finitely many strata by induction, using a Segal condition to extract deeper vanishing $\infty$-topoi from the classical one (for a decomposition $X = U \amalg Z$).
\item The result for all constructible sheaves follows by taking the limit over all stratifications.
\end{itemize}
We see that our proof is a lot more direct: there is no induction on the number of strata, no glueing, no lax fibre products or Segal conditions, and so on. In fact, the partial order on $S$ plays no role in our argument: the statement and proof only depend on the collection of strata. This removes subtle issues about hypotheses on the stratification:

\textbf{\textit{Example.}} ---- If $X$ is a union of two irreducible curves $C$ and $D$ meeting at two points $x$ and $y$, then the decomposition into the locally closed subsets $X_0 = C \setminus \{x\}$ and $X_1 = D \setminus \{y\}$ is perfectly allowed. But the only map $X \to \{0,1\}$ that is continuous is when $\{0,1\}$ has the indiscrete topology, meaning that $X$ is stratified over a preorder instead of a poset.

In addition, our data type is more pragmatic for computations. The output of \cite{BGH} is a limit of finite (layered) categories. Writing such a thing down is impractical, and it is even less clear what a continuous functor to $\Set$ is in that case. On the other hand, a category with finitely many objects and profinite $\Hom$ sets is something that can actually be computed in examples, such as \cite{vDdB}.

In work in progress of van Dobben de Bruyn and Wolf \cite{vDdBWolf}, we will also give a more direct proof of the $\infty$-categorical \'etale and pro-\'etale exodromy theorems (with fixed or arbitrary stratification), using condensed categories instead of profinite categories.

\phantomsection
\subsection*{Acknowledgements}
This work was greatly influenced by discussions with Sebastian Wolf, Peter Haine, Clark Barwick, and Mauro Porta over the past years. The author is supported by the NWO grant VI.Veni.212.204.

\section{Categories enriched in profinite sets}
Recall that a category enriched in profinite sets is a category $\mathscr C$ with a profinite topology on $\Hom(x,y)$ for $x, y \in \mathscr C$ such that the composition $\Hom(y,z) \times \Hom(x,y) \to \Hom(x,z)$ is continuous for all $x,y,z \in \mathscr C$.

\begin{Ex}\label{Ex Funlex}
Let $\mathscr C$ be a small and finitely complete category. Then the category $\Fun\lex(\mathscr C,\Fin)$ of left exact functors $F \colon \mathscr C \to \Fin$ is canonically enriched in profinite sets: given $F, G \in \Fun\lex(\mathscr C,\Fin)$, note that $\Hom(F,G) = \int_{A \in \mathscr C} \Hom(F(A),G(A))$ is the equaliser of the arrows
\begin{align*}
\prod_{A \in \mathscr C} \Hom\big(F(A),G(A)\big) &\rightrightarrows \prod_{f \colon A \to B} \Hom\big(F(A),G(B)\big)
\end{align*}
where each of the sets $\Hom(F(A),G(A))$ is finite. For $F, G, H \in \Fun\lex(\mathscr C,\Fin)$, composition $\Hom(G,H) \times \Hom(F,G) \to \Hom(F,G)$ is continuous since the same goes for the map $\prod_A \Hom(G(A),H(A)) \times \prod_A \Hom(F(A),G(A)) \to \prod_A \Hom(F(A),H(A))$.

Concretely, clopen subsets of $\Hom(F,G)$ are the sets $U_A(S) = \{\phi \in \Hom(F,G)\ |\ \phi_A \in S\}$ for $A \in \mathscr C$ and $S \subseteq \Hom(F(A),G(A))$. These sets indeed form a Boolean lattice in $\Hom(F,G)$: they are clearly closed under complements, and if $F(A), F(B) \neq \varnothing$, then $U_A(S) \cap U_B(T) = U_{A \times B}(S \times T)$, where $S \times T \subseteq \Hom(F(A) \times F(B),G(A) \times G(B))$ denotes the set $\{s \times t\ |\ s \in S, t \in T\}$. (If one of $F(A), F(B)$ is empty, there is nothing to prove.)

A basis for the topology is given by $U_A(x,y) = \{\phi \in \Hom(F,G)\ |\ \phi_A(x) = y\}$ for $A \in \mathscr C$ and $(x,y) \in F(A) \times G(A)$. Indeed, note that $U_A(x,y) \cap U_B(u,v) = U_{A \times B}\big((x,u),(y,v)\big)$, so the collection of the sets $U_A(x,y)$ is closed under finite intersection. They are clearly open, and conversely, the set $U_A(S)$ for $S \subseteq \Hom(F(A),G(A))$ agrees with
\[
\bigcup_{f \in S} \bigcap_{x \in F(A)} U_A\big(x,f(x)\big).
\]
In this notation, continuity of composition $\circ \colon \Hom(G,H) \times \Hom(F,G) \to \Hom(F,H)$ follows since $\circ^{-1}(U_A(x,z)) = \bigcup_{y \in G(A)} U_A(y,z) \times U_A(x,y)$ for any object $A \in \mathscr C$ and any $(x,z) \in F(A) \times H(A)$.
\end{Ex}

\begin{Def}\label{Def continuous}
Let $\Pi$ be a small category enriched in topological spaces. We say that a functor $F \colon \Pi \to \Top$ is \emph{continuous} if for all $x,y \in \mathscr C$, the map $\Hom(x,y) \times F(x) \to F(y)$ given by $(\phi,a) \mapsto \phi(a)$ is continuous.

If $F$ lands in locally compact Hausdorff spaces, then this is equivalent to the condition that $\Hom(x,y) \to \hom(F(x),F(y))$ is continuous for the compact-open topology. If $F$ lands in sets (viewed as discrete topological spaces), it is equivalent to the condition that for all $(a,b) \in F(x) \times F(y)$, the set $\{f \in \Hom(x,y)\ |\ F(f)(a) = b\}$ is open. We write $\Fun\cts(\Pi,\Top)$ for the category of continuous functors $\Pi \to \Top$, and likewise for $\Fun\cts(\Pi,\ProFin)$, $\Fun\cts(\Pi,\Fin)$, and $\Fun\cts(\Pi,\Set)$.
\end{Def}

\begin{Ex}\label{Ex continuous}
If $\mathscr C$ is a small category with finite limits, then the (small) category $\Fun\lex(\mathscr C,\Fin)$ is enriched in profinite sets by \autoref{Ex Funlex}. Thus, the representable functor $h_x = \Hom(x,-) \colon \Fun\lex(\mathscr C,\Fin) \to \ProFin$ is continuous for any $x \in \Fun\lex(\mathscr C,\Fin)$: this follows from the enrichment in $\ProFin$.
\end{Ex}

\begin{Lemma}\label{Lem finite opcompact}
Let $\Pi$ be a category with finitely many objects that is enriched in profinite spaces. Then every object $F \in \Fun\cts(\Pi,\Fin)$ is opcompact in $\Fun\cts(\Pi,\ProFin)$, i.e.\ $\Hom(-,F)$ takes cofiltered limits to filtered colimits.
\end{Lemma}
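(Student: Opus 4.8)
The plan is to show directly that for any cofiltered diagram $(G_j)_j$ in $\Fun\cts(\Pi,\ProFin)$ with limit $G = \lim_j G_j$, the canonical map $\colim_j \Hom(G_j,F) \to \Hom(G,F)$ is a bijection. The starting point is the objectwise statement: since $\ProFin = \Pro(\Fin)$, every finite set is opcompact in $\ProFin$, so for each object $x \in \Pi$ the map $\colim_j \Hom_{\ProFin}(G_j(x),F(x)) \to \Hom_{\ProFin}(G(x),F(x))$ is a bijection (a continuous map from a profinite set to a finite set factors through a finite quotient, and every clopen of a cofiltered limit is pulled back from a finite stage). Because $\Pi$ has only finitely many objects and filtered colimits commute with finite products, this passes to a bijection $\colim_j \prod_x \Hom(G_j(x),F(x)) \xrightarrow{\sim} \prod_x \Hom(G(x),F(x))$. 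A natural transformation is an element of this product satisfying the naturality constraints, so the content of the lemma is to see that these constraints are compatible with the colimit.

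Injectivity would then be formal: if two maps $G_j \to F$ and $G_{j'} \to F$ induce the same map $G \to F$, the objectwise injectivity gives, for each of the finitely many objects $x$, a stage at which they already agree on $G(x)$; a common refinement in the cofiltered index makes them agree objectwise, hence equal in the colimit.

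For surjectivity, let $\eta \colon G \to F$ be given. By the objectwise bijection and finitely many objects, $\eta$ lifts over a single stage $j$: there are continuous $\bar\eta_x \colon G_j(x) \to F(x)$ with $\bar\eta_x \circ \pi_x = \eta_x$, where $\pi_x \colon G(x) \to G_j(x)$ is the projection. The issue — and the crux of the argument — is that $(\bar\eta_x)_x$ need not be natural at stage $j$, since naturality is a condition indexed by the generally infinite profinite sets $\Hom(x,y)$. For a fixed pair $(x,y)$, I would encode naturality as the equality of the two continuous maps
\[
\alpha,\beta \colon \Hom(x,y) \times G_j(x) \longrightarrow F(y), \qquad \alpha(f,a) = \bar\eta_y\big(G_j(f)(a)\big), \quad \beta(f,a) = F(f)\big(\bar\eta_x(a)\big),
\]
whose common source is profinite because $\Pi$ is enriched in profinite sets. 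Pulling back along $\id \times \pi_x$ and using $\bar\eta_x \circ \pi_x = \eta_x$ together with naturality of $\eta$ on $G$, one checks that $\alpha$ and $\beta$ become equal on $\Hom(x,y) \times G(x) = \lim_k \big(\Hom(x,y) \times G_k(x)\big)$.

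Now both $\alpha,\beta$ are continuous maps from a profinite set to the finite set $F(y)$, so they factor through a common finite quotient, and the locus $U$ on which they disagree is clopen with empty preimage in the cofiltered limit. By compactness of profinite sets, the cofiltered limit of the preimages $U_k \subseteq \Hom(x,y) \times G_k(x)$ is nonempty as soon as all $U_k$ are; since it is empty, some $U_k$ is already empty, i.e.\ $\alpha$ and $\beta$ agree after restricting to stage $k$, which makes $\bar\eta$ natural for the pair $(x,y)$ at stage $k$. Taking a common refinement over the finitely many pairs of objects yields a stage at which $\bar\eta$ is a genuine natural transformation $G_k \to F$ lifting $\eta$. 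The main obstacle is precisely this last naturality upgrade: one cannot refine over the infinitely many morphisms $f$ individually, and it is the compactness of the profinite $\Hom$-sets — not merely the finiteness of the object set or of $F$ — that forces the disagreement locus to vanish at a finite stage.
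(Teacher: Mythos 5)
Your proof is correct and follows essentially the same route as the paper's: objectwise lifting of the transformation via opcompactness of $\Fin$ in $\ProFin$ (plus finiteness of $\ob\Pi$ for both injectivity and the choice of a single stage), then upgrading to naturality at a finite stage by showing the disagreement locus is clopen with empty cofiltered limit, so it must vanish at some stage by compactness. The only cosmetic difference is that the paper packages the naturality check into a single equaliser inside $\coprod_x \big(\coprod_y \Hom(x,y)\big) \times D(k)(x)$, handling all objects and morphisms at once, whereas you argue one pair $(x,y)$ at a time and take a common refinement over the finitely many pairs at the end.
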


\begin{proof}
Let $\mathcal I$ be a filtered category, let $D \colon \mathcal I\op \to \Fun\cts(\Pi,\ProFin)$ be a diagram, and also write $D$ for its limit. We get a natural map
\begin{equation}
\colim_{\substack{\longrightarrow \\ i \in \mathcal I}} \Hom\big(D(i),F\big) \to \Hom\Big( \lim_{\substack{\longleftarrow \\ i \in \mathcal I\op}} D(i), F \Big) = \Hom(D,F).\label{Eq colim}
\end{equation}
Let $i \in \mathcal I$ and suppose $\alpha, \beta \colon D(i) \rightrightarrows F$ induce the same transformation $D \to F$. Since $\Fin$ is opcompact in $\ProFin$, for each $x \in \Pi$, there exists an arrow $i \to j_x$ in $\mathcal I$ such that the two arrows $D(j_x)(x) \to D(i)(x) \rightrightarrows F(x)$ agree. Since $\ob\Pi$ is finite, choosing $j$ with maps $j_x \to j$ for all $x \in \Pi$ shows that the maps $D(j) \to D(i) \rightrightarrows F$ agree, showing injectivity of \eqref{Eq colim}.

Conversely, if $\alpha \colon D \to F$ is a natural transformation, then for every $x \in \Pi$ there exists $j_x$ in $\mathcal I$ and a map $D(j_x)(x) \to F(x)$ inducing $D(x) \to F(x)$. Choosing $j$ with maps $j_x \to j$ for all $x \in \Pi$ gives an object $j \in \mathcal I$ such that each $\alpha_x \colon D(x) \to F(x)$ comes from a map $\alpha_{j,x} \colon D(j)(x) \to F(x)$ (which are not necessarily natural in $x$). For each $x \in \Pi$, write $H_x = \coprod_{y \in \Pi} \Hom(x,y) \in \ProFin$. For $x \in \Pi$ and $k \in \mathcal I_{j/}$, write $\alpha_{k,x}$ for the composition $D(k)(x) \to D(j)(x) \to F(x)$. Then the equaliser $E_{k,x}$ of the continuous maps
\[
\begin{tikzcd}[column sep=4em]
H_x \times D(k)(x) \ar{r}{\id \times \alpha_{k,x}}\ar{d} & H_x \times F(x) \ar{d} \\
\coprod\limits_{y \in \Pi} D(k)(y) \ar{r}[swap]{\coprod \alpha_{k,y}} & \coprod\limits_{y \in \Pi} F(y)
\end{tikzcd}
\]
is clopen, hence so is the complement $Y_{k,x}$. Thus $X_k = \coprod_{x \in \Pi} H_x \times D(k)(x)$ has a clopen decomposition by $E_k = \coprod_{x \in \Pi} E_{k,x}$ and $Y_k = \coprod_{x \in \Pi} Y_{k,x}$. The limit of $X_k$ over $k \in \mathcal I_{j/}\op$ is $\coprod_{x \in \Pi} H_x \times D(x)$, and the limit of $E_k$ is the disjoint union (over all $x \in \Pi$) of the equalisers of
\[
\begin{tikzcd}[column sep=4em]
H_x \times D(x) \ar{r}{\id \times \alpha_x}\ar{d} & H_x \times F(x) \ar{d} \\
\coprod\limits_{y \in \Pi} D(y) \ar{r}[swap]{\coprod \alpha_y} & \coprod\limits_{y \in \Pi} F(y)\punct{,}
\end{tikzcd}
\]
which equals $X_k$ by naturality of $\alpha$. Thus, $\lim_{k \in \mathcal I_{j/}\op} Y_k = \varnothing$, so some $Y_k$ must be empty already \cite[Tag \href{https://stacks.math.columbia.edu/tag/0A2R}{0A2R}]{Stacks}. This means $E_k = X_k$, so $\alpha_{k,x} \colon D(k) \to F$ is a natural transformation.
\end{proof}

\section{Galois categories}
We give a generalisation of Grothendieck's theory of Galois categories \cite[Exp.~V,~\S4]{SGA1} to categories with a finite family of jointly conservative fibre functors. The main result is \autoref{Thm Galois exodromy}, which says that any such category is equivalent to $\Fun\cts(\Pi,\Fin)$ for some profinitely enriched category $\Pi$ with finitely many objects. 
Note that even in the case where $\Pi$ has one object, it will only be a profinite monoid, not necessarily a group.

We begin with some recollections on pretopoi:

\begin{Par}
Recall \cite[Exp.\ VI, Exc.\ 3.11(a)]{SGA4II} that a \emph{pretopos} is a small category $\mathscr C$ such that 
\begin{enumerate}
\item $\mathscr C$ has finite limits,
\item $\mathscr C$ has finite coproducts, and finite coproducts are disjoint and universal,
\item every equivalence relation in $\mathscr C$ is effective,
\item every epimorphism in $\mathscr C$ is a universal effective epimorphism.
\end{enumerate}
A \emph{morphism of pretopoi} $\mathscr C \to \mathscr D$ is a functor $F \colon \mathscr C \to \mathscr D$ between pretopoi that preserves finite limits, finite coproducts, and effective epimorphisms.
\end{Par}

\begin{Par}
The category of pretopoi is equivalent to the opposite of the category of coherent topoi (with coherent geometric morphisms) \cite[Exp.\ VI, Exc.\ 3.11(d)]{SGA4II}. A pretopos $\mathscr C$ is sent to the topos $\Sh\eff(\mathscr C)$ of sheaves for the effective epimorphism topology: covering families $\{Y_i \to X\}$ are finite families such that $\coprod_i Y_i \to X$ is an effective epimorphism. Conversely, a coherent topos $\mathscr X$ is sent to the full subcategory $\mathscr X\coh \subseteq \mathscr X$ of coherent objects.
\end{Par}

\begin{Ex}\label{Ex pretopos}
If $\mathscr X$ is a coherent topos, then the full subcategory $\mathscr X\coh$ of coherent objects is a pretopos. In particular:
\begin{enumerate}
\item The category $\Fin$ is a pretopos, being the coherent objects in the coherent topos $\Set$.
\item If $X$ is a qcqs scheme, then $\Sh\cons(X_\et)$ is a pretopos, as these are the coherent objects in the coherent topos $\Sh(X_\et)$.
\item\label{item S-cons} If $X$ is a qcqs scheme and $X = \coprod_{s \in S} X_s$ is a finite constructible decomposition, then $\Sh\cons[S](X_\et)$ is a coherent topos: as a subcategory of $\Sh\cons(X_\et)$, it is stable under finite limits, finite coproducts, and effective epimorphisms.
\end{enumerate}
\end{Ex}

\begin{Def}
If $\mathscr C$ is a pretopos, then a \emph{fibre functor} is a morphism of pretopoi $F \colon \mathscr C \to \Fin$. Under the equivalence described above, these correspond to \emph{coherent} geometric morphisms $\Set \to \Sh\eff(\mathscr C)$.
\end{Def}

Thus, fibre functors correspond to coherent geometric points $\Set \to \Sh\eff(\mathscr C)$. This is a subtle difference with \cite{LurieUltra}, where all geometric points are needed to reconstruct the coherent topos $\Sh\eff(\mathscr C)$.

\begin{Def}\label{Def Galois}
A \emph{Galois category} is a pretopos $\mathscr C$ such that there exists a finite family $\{F_i\ |\ i \in I\}$ of fibre functors $F_i \colon \mathscr C \to \Fin$ that is jointly conservative, i.e.\ if $f \colon A \to B$ is a map in $\mathscr C$ such that $F_i(f)$ is an isomorphism for all $i \in I$, then $f$ is an isomorphism.
\end{Def}

\begin{Ex}\label{Ex scheme}
Let $X$ be a qcqs scheme, and let $X = \coprod_{s \in S} X_s$ be a finite decomposition into connected constructible locally closed subspaces. Then $\mathscr C = \Sh\cons[S](X_\et)$ is a pretopos by \autoref{Ex pretopos}\ref{item S-cons}. Choosing a geometric point $\bar s \to X$ in each fibre, we get fibre functors $F_{\bar s} \colon \mathscr C \to \Fin$, which are jointly conservative since the fibre functor $F_{\bar s} \colon \Sh\lc(X_{s,\et}) \to \Fin$ is conservative when $X_s$ is connected. Thus, $\mathscr C$ is a Galois category.
\end{Ex}\newpage

\begin{Ex}\label{Ex scheme with cover}
More generally, in the situation above, if $Y_s \to X_s$ is a connected Galois pro-\'etale cover for each $s \in S$, write $\mathscr C \subseteq \Sh\cons[S](X_\et)$ for the full subcategory on sheaves such that the restriction to $Y_s$ is constant for all $s \in S$. Then $\mathscr C$ is stable under finite limits, finite coproducts, and effective epimorphisms, so it is a pretopos. The same fibre functors $F_{\bar s}$ give a finite jointly conservative family of morphisms of pretopoi, so $\mathscr C$ is a Galois category. A particular case of interest is where $Y_s \to X_s$ is the maximal tame cover.
\end{Ex}

We start with some elementary observations about Galois categories.

\begin{Lemma}\label{Lem Galois elementary}
Let $\mathscr C$ be a Galois category with a finite conservative family of fibre functors $\{F_i\ |\ i \in I\}$.
\begin{enumerate}
\item\label{item faithful} The product $F = (F_i)_{i \in I} \colon \mathscr C \to \Fin^I$ is faithful. 
\item\label{item Hom finite} For every $A, B \in \mathscr C$, the set $\Hom(A,B)$ is finite.
\item\label{item Yon finite} The Yoneda embedding $\mathscr C \to \Fun\lex(\mathscr C,\Set)\op = \Pro(\mathscr C)$ sends $\mathscr C$ to $\Fun\lex(\mathscr C,\Fin)\op$.
\item\label{item sub finite} For every $A \in \mathscr C$, the poset $\Sub(A)$ of subobjects $B \hookrightarrow A$ up to isomorphism is finite.
\item\label{item effective epi} If $f \colon A \to B$ is a map in $\mathscr C$ such that $F(f)$ is injective (resp.\ surjective), then $f$ is a monomorphism (resp.\ an effective epimorphism).
\end{enumerate}
\end{Lemma}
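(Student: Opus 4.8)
The plan is to derive everything from part (a) together with joint conservativity, so I would prove (a) first. To show $F = (F_i)_{i \in I}$ is faithful, take $f, g \colon A \to B$ with $F_i(f) = F_i(g)$ for every $i$, and form the equaliser $e \colon E \hookrightarrow A$ of $f$ and $g$, which exists since $\mathscr C$ has finite limits. As a fibre functor, each $F_i$ is left exact and hence preserves this equaliser; since $F_i(f) = F_i(g)$, the equaliser of $F_i(f)$ and $F_i(g)$ is all of $F_i(A)$, so $F_i(e)$ is an isomorphism for every $i$. Joint conservativity then forces $e$ to be an isomorphism, whence $f = g$.

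Parts (b) and (c) are immediate corollaries. For (b), faithfulness yields an injection $\Hom(A,B) \hookrightarrow \prod_{i \in I} \Hom(F_i(A), F_i(B))$, and the right-hand side is finite because $I$ is finite and each $F_i(A)$, $F_i(B)$ is a finite set. For (c), the Yoneda embedding $\mathscr C \to \Fun\lex(\mathscr C,\Set)\op = \Pro(\mathscr C)$ sends $A$ to the representable functor $h_A = \Hom(A,-)$, which is left exact since representables preserve limits; by (b) it lands in $\Fin$, so $h_A \in \Fun\lex(\mathscr C,\Fin)$.

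For (d) I would build an injection $\Sub(A) \hookrightarrow \prod_{i \in I} 2^{F_i(A)}$, sending a subobject $B \hookrightarrow A$ to the tuple of subsets $F_i(B) \subseteq F_i(A)$; this is well defined because each left exact $F_i$ preserves monomorphisms, and the target is finite. The content is injectivity. Given subobjects $B_1, B_2 \hookrightarrow A$ with $F_i(B_1) = F_i(B_2)$ inside $F_i(A)$ for all $i$, I would compare each with the intersection $B_1 \times_A B_2$: since $F_i$ preserves this pullback, $F_i(B_1 \times_A B_2) = F_i(B_1) \cap F_i(B_2) = F_i(B_1)$, so the inclusion $B_1 \times_A B_2 \hookrightarrow B_1$ is an isomorphism after applying every $F_i$, hence an isomorphism by joint conservativity. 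Thus $B_1 \le B_2$ as subobjects, and symmetrically $B_2 \le B_1$, so $B_1 = B_2$. I expect this intersection argument to be the main obstacle, as it is the one place where joint conservativity must be applied to an auxiliary object (the intersection) that is not directly given, rather than mere faithfulness.

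Finally, (e) splits into two dual arguments, each resting on (a). If $F(f)$ is injective and $f g = f h$ for some $g, h \colon C \to A$, then $F_i(f) F_i(g) = F_i(f) F_i(h)$ with $F_i(f)$ injective gives $F_i(g) = F_i(h)$ for all $i$, so $g = h$ by (a); hence $f$ is a monomorphism. Dually, if $F(f)$ is surjective and $g f = h f$, then surjectivity of each $F_i(f)$ gives $F_i(g) = F_i(h)$, so $g = h$ by (a), and $f$ is an epimorphism; by the pretopos axiom that every epimorphism is a universal effective epimorphism, $f$ is then an effective epimorphism.
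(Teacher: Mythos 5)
Your proof is correct. Parts (a)--(d) follow essentially the same route as the paper: the equaliser-plus-conservativity argument for faithfulness, the injection $\Hom(A,B) \hookrightarrow \prod_i \Hom(F_i(A),F_i(B))$ for finiteness, and your intersection argument for subobjects is exactly the paper's --- your map $\Sub(A) \hookrightarrow \prod_{i} 2^{F_i(A)}$ is the paper's $F^* \colon \Sub(A) \to \Sub(F(A))$ in different notation, with injectivity proved in both cases by applying conservativity to the inclusions of $B_1 \times_A B_2$ into $B_1$ and $B_2$. Part (e) is where you genuinely diverge. The paper uses that pretopoi are regular categories to factor $f$ as $A \twoheadrightarrow I \hookrightarrow B$, observes that applying $F$ (which preserves finite limits and effective epimorphisms) yields the image factorisation of $F(f)$, and then applies conservativity to whichever half is forced to be an isomorphism. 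You instead prove that $f$ is a monomorphism (resp.\ epimorphism) by a cancellation argument resting only on faithfulness, and then upgrade ``epimorphism'' to ``effective epimorphism'' by invoking the pretopos axiom that every epimorphism is a universal effective epimorphism. This is legitimate here, since the paper's cited definition of pretopos (SGA4) does include that axiom. The trade-off is worth noting: your argument uses less of the structure of the fibre functors (it never needs that $F$ preserves effective epimorphisms, only left exactness and joint conservativity), but it leans on the strong pretopos axiom; the paper's factorisation argument would go through in any regular category equipped with a conservative finite family of functors preserving finite limits and regular epimorphisms, where cancellation alone would only ever produce an epimorphism, not an effective one.
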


\begin{proof}
By hypothesis, the product $F$ is left exact and conservative. Let $f, g \colon A \rightrightarrows B$ be parallel arrows with $F(f) = F(g)$, and let $E \to A$ be the equaliser. Then $F(E) \to F(A)$ is the equaliser of $F(f)$ and $F(g)$ by left exactness, hence $F(E) \to F(A)$ is an isomorphism as $F(f) = F(g)$. Conservativity shows that $E \to A$ is an isomorphism, proving \ref{item faithful}. Then \ref{item Hom finite} follows since $\Hom(A,B) \hookrightarrow \Hom(F(A),F(B))$ and Hom-sets in $\Fin^I$ are finite, and \ref{item Yon finite} is a reformulation of this.

Since $\mathscr C$ has finite limits, the poset $\Sub(A)$ is a meet-semilattice for any $A \in \mathscr C$. Since $F$ preserves monomorphisms, we get a map $F^* \colon \Sub(A) \to \Sub(F(A))$, which preserves meets since $F$ is left exact. If $F(B) = F(C)$ for $B, C \in \Sub(A)$, then the inclusions $B \cap C \to B$ and $B \cap C \to C$ become isomorphisms after applying $F$, so they are already isomorphisms by conservativity. Thus, $F^* \colon \Sub(A) \to \Sub(F(A))$ is injective, proving \ref{item sub finite} since $\Sub(F(A))$ is finite.

Since pretopoi are regular categories, we may choose a factorisation $A \twoheadrightarrow I \hookrightarrow B$ of $f$ into an effective epimorphism followed by a monomorphism. Since $F$ preserves finite limits and effective epimorphisms, we conclude that $F(A) \twoheadrightarrow F(I) \hookrightarrow F(B)$ is the image factorisation of $F(f)$. If $F(f)$ is injective (resp.\ surjective), then the map $F(A) \to F(I)$ (resp.\ $F(I) \to F(B)$) is an isomorphism, which by conservativity implies the same for $A \to I$ (resp.\ $I \to B$). This proves \ref{item effective epi}.
\end{proof}

\begin{Def}
Let $\mathscr C$ be a category with finite limits. Then the \emph{category of profinite objects of $\mathscr C$} is the full subcategory $\Fun\lex(\mathscr C,\Fin)\op \subseteq \Fun\lex(\mathscr C,\Set)\op$. It is denoted $\Pro\fin(\mathscr C) \subseteq \Pro(\mathscr C)$. Note that it is small if $\mathscr C$ is small.
\end{Def}

\begin{Def}
Let $\mathscr C$ be a Galois category, with a finite conservative family of fibre functors $\{F_i\ |\ i \in I\}$. Then the \emph{fundamental category} $\Pi_1(\mathscr C)$ is the full profinitely enriched subcategory of $\Fun\lex(\mathscr C,\Fin) = \Pro\fin(\mathscr C)\op$ on the fibre functors $\{F_i\ |\ i \in I\}$.

Write $\ev \colon \mathscr C \to \Fun\cts(\Pi_1(\mathscr C),\Fin)$ for the evaluation $A \mapsto \big(F_i \mapsto F_i(A))$. Under the inclusion $\Fun\cts(\Pi_1(\mathscr C),\Fin) \hookrightarrow \Fun\cts(\Pi_1(\mathscr C),\ProFin)$, it is given by the composition
\begin{equation}
\mathscr C \overset h\hookrightarrow \Pro\fin(\mathscr C) \overset h\hookrightarrow \Fun\cts\big(\Pro\fin(\mathscr C)\op,\ProFin\big) \to \Fun\cts\big(\Pi_1(\mathscr C),\ProFin\big)\label{Eq Yoneda Yoneda}
\end{equation}
of the Yoneda embedding of $\mathscr C$, the profinitely enriched Yoneda embedding of $\Pro\fin(\mathscr C)$, and the restriction along $\Pi_1(\mathscr C) \hookrightarrow \Pro\fin(\mathscr C)\op$.
\end{Def}

\begin{Par}
Recall that a functor $F \colon \mathscr C \to \Set$ is canonically isomorphic in $\Fun(\mathscr C,\Set)$ to the colimit
\[
\colim_{(A,x) \in \el(F)\op} h_A,
\]
where
\[
\el(F) = \Big(\{*\} \underset{\Set}\downarrow F\Big) \simeq \Big(h \underset{\Fun(\mathscr C,\Set)}\downarrow \{F\}\Big)\op
\]
is the category of elements of $F$. If $\mathscr C$ has finite limits, then $F$ is left exact if and only if $\el(F)$ is cofiltered, identifying $\Fun\lex(\mathscr C,\Set)\op$ with $\Pro(\mathscr C)$. As element of $\Pro(\mathscr C)$, a left exact functor $F$ is therefore identified with the limit
\begin{equation}
F \cong \lim_{\substack{ \longleftarrow \\ (A,a) \in \el(F)}} h_A.\label{Eq limit}
\end{equation}
If all Hom-sets in $\mathscr C$ are finite, then each $h_A$ lives in $\Pro\fin(\mathscr C)$, so the same formula holds in $\Pro\fin(\mathscr C)$ for any left exact functor $F \colon \mathscr C \to \Fin$.
If $\mathscr C$ is a Galois category and $F = F_i$ is some object of $\Pi_1(\mathscr C)$, then the last two functors in \eqref{Eq Yoneda Yoneda} preserve all limits that exist, so we conclude that
\begin{equation}
h_{F_i} = \lim_{\substack{\longleftarrow \\ (A,a) \in \el(F_i)}} \ev(A).\label{Eq representable limit}
\end{equation}
In the locally constant case, we think of \eqref{Eq limit} as pro-representing the fibre functor by the universal cover. For constructible sheaves, it is useful to think of $F_{\bar s}$ as pro-represented by a `constructible Henselisation' at the universal cover $\widetilde{X_s} \to X_s$: the limit over all pairs $(\mathscr F,x)$ of an $S$-constructible sheaf $\mathscr F$ on $X$ with a section $x$ of the pullback of $\mathscr F$ to $\widetilde{X_s}$.
\end{Par}

\begin{Lemma}\label{Lem pointed filtered}
Let $\mathscr C$ be a Galois category, let $x = (x_1,\ldots,x_n) \in \Pi_1(\mathscr C)^n$, and write $h_{\coprod x} = \coprod_i h_{x_i} \in \Fun\cts\big(\Pi_1(\mathscr C),\ProFin\big)$. Then the category $\mathcal I = (\{h_{\coprod x}\} \downarrow \ev)$ of pairs $(A,a)$ with $A \in \mathscr C$ and $a = (a_1,\ldots,a_n) \in \ev(A)(x_1) \times \cdots \times \ev(A)(x_n)$ is cofiltered, and for every $F \in \Fun\cts\big(\Pi_1(\mathscr C),\Fin\big)$, the map
\begin{align}
\colim_{\substack{\longrightarrow \\ (A,a) \in \mathcal I\op}} \Hom\big(\ev(A),F\big) &\to \Hom(h_{\coprod x},F) \cong \prod_{i=1}^n F(x_i)\label{Eq pointed filtered} \\
\big(\phi \colon \ev(A) \to F\big) &\mapsto \phi(a) \nonumber
\end{align}
is an isomorphism.
\end{Lemma}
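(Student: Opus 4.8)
The plan is to deduce everything from the single identity
\[
h_{\coprod x} \;=\; \lim_{\substack{\longleftarrow \\ (A,a) \in \mathcal I}} \ev(A) \qquad\text{in } \Fun\cts\big(\Pi_1(\mathscr C),\ProFin\big),
\]
combined with \autoref{Lem finite opcompact}. Indeed, once $\mathcal I$ is cofiltered and this holds, opcompactness of $F \in \Fun\cts(\Pi_1(\mathscr C),\Fin)$ (here $\Pi_1(\mathscr C)$ has finitely many objects) turns the cofiltered limit into a filtered colimit, so $\colim_{(A,a) \in \mathcal I\op} \Hom(\ev(A),F) \to \Hom(\lim \ev(A),F) = \Hom(h_{\coprod x},F)$ is an isomorphism. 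The target is $\Hom(\coprod_i h_{x_i},F) = \prod_i \Hom(h_{x_i},F) = \prod_i F(x_i)$ by the enriched Yoneda lemma, and tracing the universal element identifies the composite with \eqref{Eq pointed filtered}. So it suffices to prove the displayed limit identity and cofilteredness.

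Cofilteredness of $\mathcal I$ uses only left exactness of the fibre functors. It is nonempty since the terminal object $1 \in \mathscr C$ carries the unique point of $\prod_i x_i(1) = \{*\}$. Given $(A,a)$ and $(B,b)$, the object $A \times B$ with the point $(a_k,b_k)_k \in \prod_k\big(x_k(A) \times x_k(B)\big) = \prod_k x_k(A \times B)$ maps to both. Finally, parallel arrows $f,g \colon (A,a) \rightrightarrows (B,b)$ are equalised by the equaliser $E \hookrightarrow A$ in $\mathscr C$: since each $x_k$ is left exact, $x_k(E) \hookrightarrow x_k(A)$ is the equaliser of $x_k(f),x_k(g)$, so $a_k$ lifts to $x_k(E)$.

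For the limit identity I would first note that $\mathcal I$ is canonically the category of elements $\el(P)$ of the left exact functor $P = \prod_{i=1}^n x_i \colon \mathscr C \to \Fin$, which is the coproduct $\coprod_i x_i$ in $\Pro\fin(\mathscr C) = \Fun\lex(\mathscr C,\Fin)\op$. Applying the two limit-preserving functors of \eqref{Eq Yoneda Yoneda} to the pro-presentation \eqref{Eq limit} of $P$ gives, exactly as in \eqref{Eq representable limit}, that $\lim_{\mathcal I} \ev(A)$ is the functor $x_j \mapsto \Hom_{\Pro\fin}(x_j,P) = \Hom_{\Fun\lex}(\prod_i x_i,\,x_j)$. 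As limits in $\Fun\cts(\Pi_1(\mathscr C),\ProFin)$ are computed objectwise, the canonical map $h_{\coprod x} \to \lim_{\mathcal I} \ev$ becomes, at each $x_j$, the map
\[
\coprod_{i=1}^n \Hom(x_i,x_j) \longrightarrow \Hom_{\Fun\lex}\big(\textstyle\prod_i x_i,\,x_j\big), \qquad \phi \longmapsto \phi \circ \pr_i,
\]
and the entire lemma reduces to showing this is a bijection of profinite sets.

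This bijection is the heart of the matter and is where I expect the real work to lie: it says that the fibre functor $x_j$ is \emph{connected}, i.e.\ that every map out of the product $\prod_i x_i$ factors through a unique projection. The essential input is that $x_j$ is a morphism of pretopoi, hence preserves finite coproducts (left exactness alone does not suffice here). Writing $x_i = \lim_{\beta_i \in \el(x_i)} h_{A_{i\beta_i}}$ with $A_{i\beta_i} \in \mathscr C$, the coproduct $\prod_i x_i$ is the cofiltered limit $\lim_{(\beta_i) \in \prod_i \el(x_i)} h_{\coprod_i A_{i\beta_i}}$, so
\[
\Hom_{\Fun\lex}\big(\textstyle\prod_i x_i,\,x_j\big) = \lim_{(\beta_i)} \Hom_{\Pro\fin}\big(x_j,\, h_{\coprod_i A_{i\beta_i}}\big) = \lim_{(\beta_i)} x_j\big(\textstyle\coprod_i A_{i\beta_i}\big) = \lim_{(\beta_i)} \coprod_i x_j(A_{i\beta_i}),
\]
using $\Hom_{\Pro\fin}(x_j,h_Y) = x_j(Y)$ and coproduct-preservation in the last step. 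The transition maps of this system respect the displayed coproduct decomposition, and the index category $\prod_i \el(x_i)$ is cofiltered, hence connected; this forces the locally constant ``summand index'' to be globally constant, so the finite coproduct commutes with the limit, giving $\coprod_i \lim_{\beta_i} x_j(A_{i\beta_i}) = \coprod_i \Hom(x_i,x_j)$, with the resulting bijection visibly $\phi \mapsto \phi \circ \pr_i$. The one delicate point is precisely this commutation of a finite coproduct with a cofiltered limit of finite sets: it is false for arbitrary systems, and here relies on both the connectedness of the index category and the fact that the transitions preserve the decomposition.
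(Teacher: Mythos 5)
Your proof is correct, and its skeleton is the paper's: cofilteredness of $\mathcal I$, the identity $h_{\coprod x} = \lim_{(A,a)\in\mathcal I}\ev(A)$, and then \autoref{Lem finite opcompact}. The difference lies in how that identity is established. The paper constructs an explicit functor $\Psi\colon \prod_i \el(F_{x_i}) \to \el(F_x)\cong\mathcal I$, $(A_i,a_i)_i\mapsto\big(\coprod_i A_i,\psi(a)\big)$, proves it coinitial by hand (nonemptiness via the fold map $A\amalg\cdots\amalg A\to A$, connectedness via fibre products $B_i\times_A C_i$), and then performs the interchange $\coprod_i\lim\ev(A_i)=\lim\coprod_i\ev(A_i)$ objectwise in $\ProFin$, using that $\ev$ preserves finite coproducts. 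You instead compute $\lim_{\mathcal I}\ev$ abstractly via \eqref{Eq Yoneda Yoneda} and \eqref{Eq limit} as the restricted representable functor of $P=\coprod_i x_i\in\Pro\fin(\mathscr C)$, reducing the lemma to the $\Hom$-level statement that fibre functors are \emph{connected} pro-objects, $\coprod_i\Hom(x_i,x_j)\cong\Hom_{\Fun\lex}\big(\prod_i x_i,x_j\big)$; this is a clean reformulation which makes visible exactly where preservation of finite coproducts by fibre functors (the pretopos input, which you correctly flag as indispensable) enters. The two ingredients of your proof of connectedness are, however, the same as the paper's: the presentation of $\coprod_i x_i$ as the cofiltered limit of the $h_{\coprod_i A_{i\beta_i}}$ over $\prod_i\el(x_i)$ --- which you assert rather than prove; it follows from finite products commuting with filtered colimits in $\Set$, and is precisely what the paper's coinitiality of $\Psi$ encodes --- and the commutation of finite coproducts with cofiltered limits of finite sets along decomposition-preserving transitions, which is the paper's ``cofiltered limits in $\ProFin$ commute with finite coproducts'', and which you prove correctly via connectedness of the cofiltered index category. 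Two routine steps deserve a line each in a polished write-up: the coinitiality of the projection $\prod_k\el(x_k)\to\el(x_i)$, needed to identify $\lim_{(\beta_k)}x_j(A_{i\beta_i})$ with $\Hom(x_i,x_j)$ in your final display, and the fact that your objectwise bijections are homeomorphisms (immediate, since continuous bijections of profinite sets are homeomorphisms). In sum: same mathematics, repackaged at the level of $\Hom$-sets in $\Pro\fin(\mathscr C)$ rather than of values in $\ProFin$; your version isolates a reusable statement (connectedness of fibre functors) at the cost of invoking one more standard pro-object fact.
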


\begin{proof}
Viewing the $x_i$ as fibre functors $F_{x_i} \in \Fun\lex(\mathscr C,\Fin) = \Pro\fin(\mathscr C)\op$, their product $F_x = F_{x_1} \times \cdots \times F_{x_n}$ is left exact, so its category of elements $\el(F_x) \cong \mathcal I$ is cofiltered. Define the functor
\begin{align*}
\Psi \colon \el(F_{x_1}) \times \cdots \times \el(F_{x_n}) &\to \el(F_x)\\
\big((A_1,a_1),\ldots,(A_n,a_n)\big) &\mapsto \Big(\coprod_{i=1}^n A_i, \psi(a_1,\ldots,a_n)\Big),
\end{align*}
where the $i^{\text{th}}$ coordinate of $\psi \colon \prod_i F_{x_i}(A_i) \to \prod_i F_{x_i}(\coprod_j A_j)$ is induced by the insertion of $A_i$ into the coproduct. We will write objects of the left hand side as $(A,a)$, where $A = (A_1,\ldots,A_n) \in \mathscr C^n$, and $a = (a_1,\ldots,a_n) \in \prod_{i=1}^n F_{x_i}(A_i)$. Then $\Psi$ is given by $(A,a) \mapsto (\coprod A,\psi(a))$, where $\coprod A$ is shorthand for $\coprod_i A_i$.

We claim that $\Psi$ is coinitial: if $(A,a) \in \el(F_x)$ with $a = (a_1,\ldots,a_n)$, we need to check that $(\Psi \downarrow (A,a))$ is connected. Since $\Psi((A,a_1),\ldots,(A,a_n))$ admits a map to $(A,a)$ given by $A \amalg \ldots \amalg A \to A$, we see that $(\Psi \downarrow (A,a))$ is nonempty. Given objects $\Psi(B,b) \to (A,a)$ and $\Psi(C,c) \to (A,a)$ of $(\Psi \downarrow (A,a))$, define $D_i = B_i \times_A C_i$, and endow it with the points $d_i = (b_i,c_i) \in F_{x_i}(D_i) = F_{x_i}(B_i) \times_{F_{x_i}(A)} F_{x_i}(C_i)$. This gives an object $\Psi(D,d) \to (A,a)$ in $(\Psi \downarrow (A,a))$ mapping to the given two, showing connectedness of $(\Psi \downarrow (A,a))$, so $\Psi$ is coinitial. Thus, we may compute limits over $\mathcal I$ as limits over $\prod_{i=1}^n \el(F_{x_i})$, so \eqref{Eq representable limit} gives
\[
h_{\coprod x} = \coprod_{i=1}^n\ \lim_{\substack{\longleftarrow \\ (A_i,a_i) \in \el(F_{x_i})}} \ev(A_i) = \lim_{\substack{\longleftarrow \\ (A_i,a_i)_i \in \prod_i \el(F_{x_i})}}\ \coprod_{i=1}^n \ev(A_i) = \lim_{\substack{\longleftarrow \\ (A,a) \in \mathcal I}} \ev(A),
\]
since cofiltered limits in $\ProFin$ commute with finite coproducts and $\ev$ preserves finite coproducts. The result follows since every $F \in \Fun\cts(\Pi_1(\mathscr C),\Fin)$ is opcompact in $\Fun\cts(\Pi_1(\mathscr C),\ProFin)$ by \autoref{Lem finite opcompact}.
\end{proof}

\begin{Cor}\label{Cor filtered colimit}
Let $\mathscr C$ be a Galois category and let $\{F_i\ |\ i \in I\}$ be a finite conservative family of fibre functors.
\begin{enumerate}
\item\label{item ev surjection} If $F \in \Fun\cts(\Pi_1(\mathscr C),\Fin)$, there exists $A \in \mathscr C$ and a surjection $\ev(A) \twoheadrightarrow F$.
\item\label{item locally full} If $A, B \in \mathscr C$ and $\alpha \colon \ev(A) \to \ev(B)$ is a natural transformation, there exists an effective epimorphism $f \colon C \twoheadrightarrow A$ and a map $g \colon C \to B$ such that $\ev(g) = \alpha \circ \ev(f)$.
\item\label{item subobject} If $A \in \mathscr C$ and $F \subseteq \ev(A)$ is a subpresheaf, then there exists a monomorphism $I \hookrightarrow A$ identifying $F$ with $\ev(I)$.
\end{enumerate}
\end{Cor}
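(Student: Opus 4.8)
The plan is to derive all three parts from \autoref{Lem pointed filtered}, applied with $x$ an enumeration of the finitely many elements of the relevant finite functors, together with the elementary properties of \autoref{Lem Galois elementary}. Throughout I will use two facts: by \autoref{Lem Galois elementary}\ref{item effective epi} and the objectwise detection of epimorphisms and monomorphisms in $\Fun\cts(\Pi_1(\mathscr C),\Fin)$, a map $f$ in $\mathscr C$ is an effective epimorphism as soon as $\ev(f)$ is surjective; and, since each $F_i$ is a morphism of pretopoi, $\ev$ preserves epimorphism--monomorphism factorisations. For \ref{item ev surjection}, since $\ob\Pi_1(\mathscr C)$ and each $F(x)$ are finite, I enumerate all pairs $(x_j,b_j)$ with $x_j \in \ob\Pi_1(\mathscr C)$ and $b_j \in F(x_j)$; the corresponding Yoneda map $h_{\coprod x} \to F$ is objectwise surjective, since every $b \in F(y)$ is already hit by the leg $h_y \to F$ at $\id_y$. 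Viewing it as an element of $\Hom(h_{\coprod x},F) \cong \prod_j F(x_j)$, the surjectivity half of \eqref{Eq pointed filtered} factors it as $h_{\coprod x} \to \ev(A) \xrightarrow{\phi} F$; as the composite is surjective, so is $\phi$, giving $\ev(A) \twoheadrightarrow F$.

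The heart of the argument is \ref{item locally full}, and this is the step I expect to be the main obstacle: \autoref{Lem pointed filtered} only produces \emph{natural transformations}, whereas we must exhibit a genuine morphism of $\mathscr C$. The key idea is to feed the graph of $\alpha$ into the \emph{injectivity} half of the lemma rather than trying to lift $\alpha$ directly. I take $x = (x_1,\ldots,x_n)$ to enumerate the elements of $\ev(A)$, with canonical point $c$, so that the induced $p \colon h_{\coprod x} \twoheadrightarrow \ev(A)$ is surjective. Then $(p,\alpha p) \colon h_{\coprod x} \to \ev(A) \times \ev(B) = \ev(A \times B)$ defines an object $(A \times B,(c,\alpha c)) \in \mathcal I$. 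The two natural transformations $\ev(\pr_B)$ and $\alpha \circ \ev(\pr_A)$ from $\ev(A \times B)$ to $\ev(B)$ both pull back along this leg to $\alpha p$, so by injectivity in \eqref{Eq pointed filtered} they agree after refining: there is $t \colon C \to A \times B$ in $\mathscr C$ carrying some point to $(c,\alpha c)$ with $\ev(\pr_B) \circ \ev(t) = \alpha \circ \ev(\pr_A) \circ \ev(t)$. Setting $f = \pr_A \circ t$ and $g = \pr_B \circ t$ then gives $\ev(g) = \alpha \circ \ev(f)$, and $f$ is an effective epimorphism because $\ev(f)$ sends the chosen point to $c$, hence is surjective. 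The crucial point is that $\ev(\pr_B)$ and $\alpha\circ\ev(\pr_A)$ need not be maps from $\mathscr C$ for injectivity to apply, yet the equalising $t$ \emph{is} such a map, and $g = \pr_B \circ t$ therefore lives in $\mathscr C$.

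Finally, \ref{item subobject} should follow formally from \ref{item ev surjection} and \ref{item locally full}. First I note that the subpresheaf $F \subseteq \ev(A)$ is automatically continuous, the openness condition for $F$ being inherited from that for $\ev(A)$. Applying \ref{item ev surjection} to $F$ gives a surjection $\phi \colon \ev(B) \twoheadrightarrow F$, which I compose with the inclusion $\iota \colon F \hookrightarrow \ev(A)$ to view as a natural transformation $\ev(B) \to \ev(A)$. By \ref{item locally full} there are an effective epimorphism $f \colon C \twoheadrightarrow B$ and $g \colon C \to A$ with $\ev(g) = \iota \circ \phi \circ \ev(f)$. I factor $g$ in the pretopos as $C \twoheadrightarrow I \hookrightarrow A$; since $\ev$ preserves this factorisation, $\ev(I) \subseteq \ev(A)$ is the image of $\ev(g)$. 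But $\ev(g) = \iota \circ (\phi \circ \ev(f))$ with $\phi \circ \ev(f)$ surjective onto $F$, so the image of $\ev(g)$ is exactly $F$. Hence $\ev(I) = F$ as subobjects of $\ev(A)$, and $I \hookrightarrow A$ is the required monomorphism.
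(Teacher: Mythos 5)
Your proposal is correct and takes essentially the same route as the paper: all three parts are deduced from \autoref{Lem pointed filtered} by pointing at all relevant elements, with your proofs of \ref{item ev surjection} and \ref{item subobject} matching the paper's in structure (the paper's canonical map $\coprod_x F(x) \times h_x \twoheadrightarrow F$ is exactly your enumeration of pairs). The only, cosmetic, difference is in \ref{item locally full}: you apply the injectivity half of \eqref{Eq pointed filtered} to the two transformations $\ev(\pr_B)$ and $\alpha \circ \ev(\pr_A)$ at the single index $(A \times B, (c,\alpha c))$, whereas the paper compares $\alpha$ at $(A,a)$ with $\id_{\ev(B)}$ at $(B,b)$; both arguments rest on the same mechanism of injectivity plus cofilteredness of $\mathcal I$, and on pointing $\ev(A)$ at all of its elements to force $\ev(f)$ to be surjective.
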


\begin{proof}
Given $F \in \Fun\cts(\Pi_1(\mathscr C),\Fin)$, there is a canonical map $\coprod_{x \in \Pi_1(\mathscr C)} F(x) \times h_x \twoheadrightarrow F$ in $\Fun\cts(\Pi_1(\mathscr C),\ProFin)$. Then \autoref{Lem pointed filtered} implies that it factors as $h_{\coprod x} \to \ev(A) \to F$ for some $A \in \mathscr C$, which proves \ref{item ev surjection}.

If $\alpha \colon \ev(A) \to \ev(B)$ is any natural transformation, there is again a canonical map $\coprod_{x \in \Pi_1(\mathscr C)} \ev(A)(x) \times h_x \twoheadrightarrow \ev(A) \to \ev(B)$, turning $A$ and $B$ into objects $(A,a)$ and $(B,b)$ of $(\{h_{\coprod x}\} \downarrow \ev)$. Setting $F = \ev(B)$, both $\ev(A) \to \ev(B)$ and $\ev(B) = \ev(B)$ are elements on the left hand side of \eqref{Eq pointed filtered} inducing the same map on the right hand side. Thus, there exists $(C,c) \in (\{h_{\coprod x}\} \downarrow \ev)$ with maps $f \colon C \to A$ and $g \colon C \to B$ such that $\alpha \circ \ev(f) = \ev(g)$ and $f(c) = a$ and $g(c) = b$. Since we pointed at all points of $\ev(A)$, this implies that $\ev(f)$ is surjective, hence $f$ is an effective epimorphism by \autoref{Lem Galois elementary}\ref{item effective epi}. This finishes the proof of \ref{item locally full}.

If $F \subseteq \ev(A)$ is a subpresheaf, then by \ref{item ev surjection}, there exists a surjection $\ev(B) \twoheadrightarrow F$, and by \ref{item locally full}, there exists an effective epimorphism $C \twoheadrightarrow B$ such that $\ev(C) \twoheadrightarrow \ev(B) \twoheadrightarrow F \hookrightarrow \ev(A)$ comes from a map $f \colon C \to A$. We may write $f$ as a composition $C \twoheadrightarrow I \hookrightarrow A$ of an effective epimorphism followed by a monomorphism, and these properties are preserved by $\ev$. We conclude that $\ev(I) = F$ as subpresheaves of $\ev(A)$.
\end{proof}

\begin{Thm}\label{Thm Galois exodromy}
Let $\mathscr C$ be a Galois category and let $\{F_i\ |\ i \in I\}$ be a finite jointly conservative family of fibre functors. Then the functor $\ev \colon \mathscr C \to \Fun\cts(\Pi_1(\mathscr C),\Fin)$ is an equivalence of categories.
\end{Thm}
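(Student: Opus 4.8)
The plan is to prove separately that $\ev$ is faithful, full, and essentially surjective, using throughout that $\ev$ preserves finite limits and effective epimorphisms (both computed pointwise, since each $F_i$ does) and that it is \emph{conservative}, which is precisely the joint conservativity of $\{F_i\}$. Faithfulness is immediate: the component of $\ev(f) \colon \ev(A) \to \ev(B)$ at the object $F_i \in \Pi_1(\mathscr C)$ is $F_i(f)$, so $\ev(f) = \ev(g)$ forces $F_i(f) = F_i(g)$ for all $i$, whence $f = g$ by the faithfulness of the product $(F_i)_i$ in \autoref{Lem Galois elementary}\ref{item faithful}.

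For fullness I would start from $\alpha \colon \ev(A) \to \ev(B)$ and apply local fullness, \autoref{Cor filtered colimit}\ref{item locally full}, to get an effective epimorphism $f \colon C \twoheadrightarrow A$ and a map $g \colon C \to B$ with $\ev(g) = \alpha \circ \ev(f)$. The task is to descend $g$ along $f$. Writing $p_1, p_2 \colon C \times_A C \rightrightarrows C$ for the kernel pair, I would verify $g p_1 = g p_2$ by applying $\ev$: since $\ev(g p_j) = \alpha \circ \ev(f p_j)$ and $f p_1 = f p_2$, the two agree, so $g p_1 = g p_2$ by faithfulness. As $f$ is an effective epimorphism, $A$ is the coequaliser of its kernel pair, so $g = h f$ for a unique $h \colon A \to B$. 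Then $\ev(h) \circ \ev(f) = \ev(g) = \alpha \circ \ev(f)$, and $\ev(f)$ is pointwise surjective, hence an epimorphism, so it right-cancels to give $\ev(h) = \alpha$.

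For essential surjectivity, fix $F \in \Fun\cts(\Pi_1(\mathscr C),\Fin)$ and use \autoref{Cor filtered colimit}\ref{item ev surjection} to get a surjection $\pi \colon \ev(B) \twoheadrightarrow F$. I would form the kernel pair $R = \ev(B) \times_F \ev(B)$, a subpresheaf of $\ev(B) \times \ev(B) = \ev(B \times B)$ that is again continuous and finite-valued. By \autoref{Cor filtered colimit}\ref{item subobject} there is a monomorphism $I \hookrightarrow B \times B$ with $\ev(I) = R$. The key claim is that $I$ is an equivalence relation on $B$: reflexivity, symmetry, and transitivity each assert that a certain map into $B \times B$ factors through $I$, and a factorisation through a monomorphism can be tested after applying $\ev$ (since $\ev$ preserves the relevant pullbacks and is conservative), where it holds because $R$ is an equivalence relation. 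Since $\mathscr C$ is a pretopos, $I$ is effective, giving an effective epimorphism $q \colon B \twoheadrightarrow A$ with kernel pair $I$. Applying $\ev$ yields a surjection $\ev(q) \colon \ev(B) \twoheadrightarrow \ev(A)$ with kernel pair $\ev(I) = R$. Now $\pi$ and $\ev(q)$ are two surjections out of $\ev(B)$ with the same kernel pair $R$, so computing pointwise, where $\ev(A)(x) = \ev(B)(x)/R(x) = F(x)$, both exhibit the coequaliser of $R \rightrightarrows \ev(B)$, yielding a natural isomorphism $F \cong \ev(A)$.

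The main obstacle I expect is the essential surjectivity argument, and within it the verification that the subobject $I \hookrightarrow B \times B$ supplied by \autoref{Cor filtered colimit}\ref{item subobject} genuinely is an equivalence relation in $\mathscr C$. This is the point where conservativity of $\ev$, its preservation of finite limits, and the pretopos axiom that equivalence relations are effective must be combined carefully; one also has to confirm that the kernel pair $R$ lands in $\Fun\cts(\Pi_1(\mathscr C),\Fin)$ so that \autoref{Cor filtered colimit} applies. By comparison, faithfulness, fullness, and the final identification of the two quotients of $\ev(B)$ are formal.
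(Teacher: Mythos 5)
Your proof is correct and follows essentially the same route as the paper: faithfulness via \autoref{Lem Galois elementary}\ref{item faithful}, fullness by descending along the kernel pair of the effective epimorphism from \autoref{Cor filtered colimit}\ref{item locally full}, and essential surjectivity by realising the kernel pair of $\ev(B) \twoheadrightarrow F$ as $\ev(I)$ via \autoref{Cor filtered colimit}\ref{item subobject} and taking the quotient of the resulting equivalence relation. The only cosmetic difference is that you verify $I$ is an equivalence relation using conservativity plus preservation of pullbacks, where the paper invokes full faithfulness of $\ev$ (already established at that point) to test the factorisations on representables; both are sound.
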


Although the strategy above is very much inspired by \cite[Tag \href{https://stacks.math.columbia.edu/tag/03A0}{03A0}]{Stacks}, it does not rely on it.

\begin{proof}
We already saw in \autoref{Lem Galois elementary}\ref{item faithful} that $(F_i)_{i \in I} \colon \mathscr C \to \Fin^I$ is faithful. Since it factors through $\ev$, we see that $\ev$ is faithful. If $\alpha \colon \ev(A) \to \ev(B)$ is a map, then by \autoref{Cor filtered colimit}\ref{item locally full}, there exists an effective epimorphism $f \colon C \twoheadrightarrow A$ and a map $g \colon C \to B$ such that $\ev(g) = \alpha \circ \ev(f)$. If $R \rightrightarrows C$ is the kernel pair of $f$, then $A$ is the coequaliser of $R \rightrightarrows C$. The compositions $R \rightrightarrows C \overset g\to B$ agree after applying $\ev$, hence they agree by faithfulness, so $g \colon C \to B$ factors uniquely as $hf$ for some $h \colon A \to C$. We see that $\ev(h)\ev(f) = \ev(g) = \alpha \ev(f)$, so $\ev(h) = \alpha$ since $\ev(f)$ is surjective, proving that $\ev$ is full.

If $F \in \Fun\cts(\Pi_1(\mathscr C),\Fin)$, there exists a surjection $\ev(A) \twoheadrightarrow F$ by \autoref{Cor filtered colimit}\ref{item ev surjection}. If $R \subseteq \ev(A) \times \ev(A)$ is the kernel pair, there exists a subobject $B \subseteq A \times A$ such that $\ev(B) = R$ by \autoref{Cor filtered colimit}\ref{item subobject}. By fully faithfulness, the subobject $B \subseteq A \times A$ is an equivalence relation: it suffices to check this for $\Hom(-,B) \subseteq \Hom(-,A \times A)$, which can be computed after applying $\ev$. Thus, we may form the quotient $A \twoheadrightarrow C$, which is preserved by $\ev$, so $\ev(C) \cong F$. This proves essential surjectivity.
\end{proof}

\begin{Def}
Let $X$ be a qcqs scheme and let $X = \coprod_{s \in S} X_s$ be a finite decomposition into constructible locally closed subschemes. Assume that each stratum is connected, and choose a geometric point $\bar s$ in each stratum $X_s$, inducing a fibre functor $F_{\bar s} \colon \Sh\cons[S](X_\et) \to \Fin$. The \emph{stratified fundamental category} $\Pi_1^\et(X,S)$ is the fundamental category $\Pi_1(\mathscr C)$ of the pretopos $\mathscr C = \Sh\cons[S](X_\et)$ with the fibre functors $F_{\bar s}$ for $s \in S$.
\end{Def}

\begin{Cor}[Exodromy correspondence]\label{Thm exodromy}
Let $X$ be a qcqs scheme and let $X = \coprod_{s \in S} X_s$ be a finite decomposition into connected constructible locally closed subschemes. Then the natural functor
\[
\ev \colon \Sh\cons[S](X_\et) \to \Fun\cts\big(\Pi_1^\et(X,S),\Fin\big)
\]
is an equivalence.\hfill\qedsymbol
\end{Cor}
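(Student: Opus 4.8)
The plan is to recognise this corollary as a direct instance of the abstract equivalence \autoref{Thm Galois exodromy}, so that essentially all the work reduces to checking its hypotheses for the concrete category $\mathscr C = \Sh\cons[S](X_\et)$.

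First I would invoke \autoref{Ex pretopos}\ref{item S-cons} to see that $\Sh\cons[S](X_\et)$ is a coherent topos, and in particular a pretopos: as a full subcategory of $\Sh\cons(X_\et)$ it is closed under finite limits, finite coproducts, and effective epimorphisms. Next, choosing a geometric point $\bar s \in X_s$ in each stratum produces fibre functors $F_{\bar s} \colon \mathscr C \to \Fin$, and I would verify that they form a finite jointly conservative family in the sense of \autoref{Def Galois}. This is exactly the content of \autoref{Ex scheme}: the family is finite because $S$ is finite, and it is jointly conservative because each $X_s$ is connected, so that a map of $S$-constructible sheaves that becomes an isomorphism on every stratum (i.e.\ after applying every $F_{\bar s}$) is already an isomorphism.

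Having exhibited $\mathscr C$ as a Galois category with this family, I would observe that by construction the stratified fundamental category $\Pi_1^\et(X,S)$ is precisely the fundamental category $\Pi_1(\mathscr C)$ attached to the $F_{\bar s}$. Applying \autoref{Thm Galois exodromy} then yields that $\ev \colon \mathscr C \to \Fun\cts(\Pi_1(\mathscr C),\Fin)$ is an equivalence, which is exactly the assertion of the corollary.

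I do not expect a genuine obstacle, since all the substance has already been absorbed into \autoref{Thm Galois exodromy}---and ultimately into the opcompactness computation of \autoref{Lem finite opcompact} together with the cofiltered-limit argument of \autoref{Lem pointed filtered}. The only step requiring real care is the joint conservativity of the $F_{\bar s}$, where the hypothesis that each stratum $X_s$ is connected is indispensable; without connectedness one would only recover a weaker statement mediated by the fundamental categories of the individual strata, rather than the single fibre functor per stratum used here.
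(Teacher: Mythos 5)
Your proposal is correct and matches the paper's own treatment exactly: the corollary is stated with an immediate \qedsymbol\ because, as you say, \autoref{Ex pretopos}\ref{item S-cons} and \autoref{Ex scheme} exhibit $\Sh\cons[S](X_\et)$ as a Galois category with the jointly conservative family $\{F_{\bar s}\}_{s \in S}$ (connectedness of the strata being precisely what makes each $F_{\bar s}$ conservative on $\Sh\lc(X_{s,\et})$), and $\Pi_1^\et(X,S) = \Pi_1(\mathscr C)$ by definition, so \autoref{Thm Galois exodromy} applies verbatim.
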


\begin{Par}
Similarly, \autoref{Ex scheme with cover} gives a variant for the $S$-constructible sheaves that are trivialised on a given set of connected pro-\'etale Galois covers $Y_s \to X_s$. Applying this to the tame universal cover (i.e.\ the maximal tamely ramified extension) gives an equivalence
\[
\Sh\cons[S]^t(X_\et) \stackrel\sim\to \Fun\cts\big(\Pi_1^t(X,S),\Fin\big).
\]
\end{Par}

\phantomsection
\bibliographystyle{alphaurledit}
{\footnotesize\bibliography{Exodromy.bib}}

\begin{thebibliography}{vDdB24}

\bibitem[BGH18]{BGH}
{\scshape {\scshape C.~Barwick}, {\scshape S.~Glasman}, and {\scshape
  P.~Haine}}.
\newblock {\em Exodromy}.
\newblock Book in preparation, 2018.
\newblock arXiv: \href {https://arxiv.org/abs/1807.03281} {\path{1807.03281}}.

\bibitem[vDdB24]{vDdB}
{\scshape {\scshape R.~van Dobben {de}~Bruyn}}.
\newblock {\em Constructible sheaves on toric varieties}.
\newblock Preprint, 2024.

\bibitem[DBW]{vDdBWolf}
{\scshape {\scshape R.~van Dobben {de}~Bruyn} and {\scshape S.~Wolf}}.
\newblock {\em The pro\'etale exodromy theorem}.
\newblock In preparation.

\bibitem[HPT24]{HainePortaTeyssier}
{\scshape {\scshape P.~Haine}, {\scshape M.~Porta}, and {\scshape
  J.-B.~Teyssier}}.
\newblock {\em Exodromy beyond conicality}.
\newblock Preprint, 2024.
\newblock arXiv: \href {https://arxiv.org/abs/2401.12825} {\path{2401.12825}}.

\bibitem[Ker]{Kerodon}
{\scshape {\scshape J.~Lurie}}.
\newblock \href {https://kerodon.net/} {{\em Kerodon}}, 2028--2024.

\bibitem[SAG]{LurieSAG}
{\scshape {\scshape J.~Lurie}}.
\newblock \href {https://www.math.ias.edu/~lurie/papers/SAG-rootfile.pdf} {{\em
  Spectral algebraic geometry}}.
\newblock Book in preparation, 2018.

\bibitem[Lur20]{LurieUltra}
{\scshape {\scshape J.~Lurie}}.
\newblock \href {https://www.math.ias.edu/~lurie/papers/Conceptual.pdf} {{\em
  Ultracategories}}.
\newblock Preprint, 2020.

\bibitem[SGA1]{SGA1}
{\scshape {\scshape A.~Grothendieck}}, \href
  {https://dx.doi.org/10.1007/BFb0058656} {{\em S\'eminaire de G\'eom\'etrie
  Alg\'ebrique du Bois Marie 1960--1961 - Rev\^etements \'etales et groupe
  fondamental (SGA 1)}}.
\newblock Lecture Notes in Mathematics {\bfseries 224}. Springer-Verlag,
  Berlin-New York, 1971.

\bibitem[SGA4$_{\text{II}}$]{SGA4II}
{\scshape {\scshape M.~Artin}, {\scshape A.~Grothendieck}, and {\scshape
  J.~L.~Verdier}}, \href {https://dx.doi.org/10.1007/BFb0061319} {{\em
  S\'eminaire de g\'eom\'etrie alg\'ebrique du Bois-Marie 1963--1964 -
  Th\'eorie des topos et cohomologie \'etale des sch\'emas (SGA 4). {T}ome 2}}.
\newblock Lecture Notes in Mathematics {\bfseries 270}. Springer-Verlag,
  Berlin-New York, 1972.

\bibitem[Stacks]{Stacks}
{\scshape {\scshape {The Stacks Project Authors}}}.
\newblock \href {https://stacks.math.columbia.edu} {{\em The stacks project}},
  2005--2024.

\end{thebibliography}

\end{document}